\newcommand{\bburl}[1]{\textcolor{blue}{\url{#1}}}
\newtheorem{thm}{Theorem}[section]
\newtheorem{cor}[thm]{Corollary}
\newtheorem{lem}[thm]{Lemma}
\newtheorem{prop}[thm]{Proposition}
\theoremstyle{definition}
\theoremstyle{definition}
\theoremstyle{remark}
\newtheorem{rem}[thm]{Remark}
\newcommand\be{\begin{equation}}
\newcommand\ee{\end{equation}}
\newcommand\ben{\begin{enumerate}}
\newcommand\een{\end{enumerate}}
\numberwithin{equation}{section}
\newcommand{\Z}{\ensuremath{{\bf Z}}}
\newcommand{\Q}{{\bf Q}}
\newcommand{\N}{{\bf N}}
\title{Shifts of the prime divisor function of Alladi and Erd\H{o}s }
\author{Snehal Shekatkar}
\email{\textcolor{blue}{\href{snehal.shekatkar@cms.unipune.ac.in}{snehal.shekatkar@cms.unipune.ac.in}}}
\address{Centre for modeling and simulation, S.P. Pune University, Pune, Maharashtra, 411007 India}
\author{Tian An Wong}
\email{\textcolor{blue}{\href{tiananwong@iiserpune.ac.in}{tiananwong@iiserpune.ac.in}}}
\address{Indian Institute for Science Education and Research (IISER) Pune, Dr Homi Bhabha Road, Pune, Maharashtra, 411008 India}
\subjclass[2010]{11N64, 05C70 \and 11N37}
\keywords{Sum of prime divisors, aliquot sequence, Catalan-Dickson}
\date{\today}
\begin{document}

\begin{abstract}
We introduce a variation on the prime divisor function $B(n)$ of Alladi and Erd\H{o}s, a close relative of the sum of proper divisors function $s(n)$. After proving some basic properties regarding these functions, we study the dynamics of its iterates and discover behaviour that is reminiscent of aliquot sequences. We prove that no unbounded sequences occur, analogous to the Catalan-Dickson conjecture, and give evidence towards the analogue of the Erd\H{o}s-Granville-Pomerance-Spiro conjecture on the pre-image of $s(n)$. 
\end{abstract}

\maketitle



\section{Introduction}


Let $n$ be a positive integer with prime factorization $n=p_1^{r_1}\dots p_k^{r_k}$. Consider the sum of prime divisors function $B(n) = \sum_{i=1}^kr_ip_i$ and the sum of distinct prime divisors function $\beta(n) = \sum_{i=1}^k p_i$. They can be viewed as variants of the sum of proper divisors function $s(n)=\sum_{d||n}d$, discussed since Pythagoras. The arithmetic properties of $B(n)$, for example, were studied by Alladi and Erd\"os \cite{AE}, and $\beta(n)$ studied by Hall \cite{H} since the 1970s.\footnote{In \cite{AE}, the authors denote $B(n),\beta(n)$ by $A(n), A^*(n)$. In this paper we follow the former convention, consistent with more recent literature.} These are large additive functions in the sense that they have the same average order as the largest prime factor of $n$, which is expected by the well-known result of Hardy and Ramanujan that $\Omega(n)$ and $\omega(n)$ have the same average order $\log\log n$.

In this paper, we introduce perturbations of $\beta(n)$ and $B(n)$, by shifting the values at certain fixed points. Clearly $B(n)=\beta(n)=n$ if $n$ is prime. Then define for a fixed positive integer $n$, the functions
\be
B_a(n) = 
\begin{cases}
n+a		& n \text{ prime}\\
B(n)		& n \text{ otherwise}
\end{cases}
\ee
and similarly $\beta_a(n)$. They are no longer additive, and behave in similar and different ways in comparison to the original functions, as we discuss in Section \ref{shifts}. 

Our main interest will lie in their iterates, denoting $B^2_a(n)= B_a(B_a(n))$, and similarly for $\beta_a(n)$. The aliquot sequence $n, s(n), s(s(n)),\dots$ stops at either primes, or cycles of length two (amicable pairs) or longer (sociable numbers). Iterating $B_a(n)$ and $\beta_a(n)$, we encounter similar phenomena, detailed in Section \ref{iterates}. Surprisingly, for fixed $a$ only a handful of cycles are observed; it is in varying $a$ we find many different cycles. 

This suggests a version of the Catalan-Dickson conjecture, which states that there does not exist unbounded aliquot sequences (or, alternatively, the Guy-Selfridge counter-conjecture \cite{GS} which gives certain candidate counterexamples). It is obvious that $B_0^k(n)$ and $\beta_0^k(n)$ all eventually reach a fixed point, but for general $a$ there may be exceptional cases in which the sequences escape to infinity. In support of this, the Green-Tao theorem guarantees that there exists integers $n$ and $a$ such that
\be
n<B_a(n) < \dots < B_a^k(n), \text{ and }\ n<\beta_a(n) < \dots < \beta_a^k(n).
\ee
that for any $k>0$ (Corollary \ref{GT}). Nonetheless, we show in Theorem \ref{finite}, that in fact no unbounded sequences occur.
\begin{thm}
There exists finitely many cycles for each $a$, and all integers iterate to cycles. 
\end{thm}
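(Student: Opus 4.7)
The plan is to prove both conclusions by exhibiting a threshold $N_a$, depending only on $a$, such that the orbit of every prime $p > N_a$ under $B_a$ reaches a value strictly less than $p$ after a bounded number of iterations. Combined with the elementary fact that $B(n) < n$ for every composite $n \ne 4$, this traps every orbit in a fixed bounded region, from which the finiteness of cycles and absorption into cycles both follow.

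The first ingredient is the pointwise bound $B(n) \le n/2 + 2$ for every composite $n \ge 4$. Writing $n = p^r k$ with $p$ the smallest prime factor of $n$ and $\gcd(p, k) = 1$, one has $B(n) = rp + B(k) \le rp + k$, and a short case analysis on $(p, r)$ yields the inequality (equality holding precisely at $n \in \{4, 8\}$ and $n = 2q$ for $q$ an odd prime). The second ingredient bounds the lengths of prime runs in arithmetic progressions. Let $q_0 = q_0(a)$ denote the smallest prime not dividing $a$. For any prime $p > q_0$, the congruence $p + ja \equiv 0 \pmod{q_0}$ has a unique solution $j \in \{1, \ldots, q_0 - 1\}$, producing a composite multiple of $q_0$ in the arithmetic progression $p, p + a, p + 2a, \ldots$. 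Hence the ascent length $l(p) := \max\{l \ge 0 : p + ja \text{ is prime for all } 0 \le j \le l\}$ satisfies $l(p) \le q_0 - 2$ for every such $p$.

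Combining these, set $N_a := (q_0 - 1) a + 4$. For a prime $p > N_a$ with ascent length $l \le q_0 - 2$, the composite $m := p + (l+1)a$ satisfies $m \le p + (q_0 - 1) a$, so the first bound gives $B_a^{l+2}(p) = B(m) \le m/2 + 2 < p$, where strictness uses $(l+1) a \le (q_0 - 1) a < p - 4$. Thus one round of ascent-plus-descent from any prime $p > N_a$ lands strictly below $p$, and in particular the next prime reached along the orbit is strictly smaller than $p$.

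The conclusion follows by tracking the sequence $(Q_i)$ of primes at which successive ascents in the orbit begin. The previous step gives $Q_{i+1} < Q_i$ whenever $Q_i > N_a$, and a direct computation from $N_a = (q_0 - 1) a + 4$ shows $Q_{i+1} \le (Q_i + (q_0 - 1) a)/2 + 2 \le N_a$ once $Q_i \le N_a$. Thus $(Q_i)$ descends strictly into $[1, N_a]$ and then stays there, after which the orbit is confined to $[1, N_a + (q_0 - 1) a]$ and enters a cycle by pigeonhole. Applied to a cycle $C$, this confines every cycle to the same finite set (cycles without primes collapse to $\{4\}$ by the composite inequality), giving finitely many cycles. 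The main obstacle is the ascent-length bound in the second step; the divisibility trick using $q_0(a)$ sidesteps any sieve-theoretic input and keeps the whole argument elementary.
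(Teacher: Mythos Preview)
Your proof is correct and follows essentially the same strategy as the paper: both arguments hinge on the inequality $B(n)\le n/2+2$ for composite $n$, and both use the smallest prime $q_0$ (the paper calls it $s$) not dividing $a$ to locate a composite term within the first $q_0$ steps of the arithmetic progression $p,\,p+a,\,p+2a,\dots$, thereby forcing a descent below $p$ for all sufficiently large primes. Your write-up is somewhat more careful than the paper's---you sharpen the ascent bound to $q_0-1$ steps rather than $q_0$, and you explicitly track the sequence $(Q_i)$ of successive ascent primes to pin down the absorbing interval---but the key lemma and the overall architecture are the same; one small point to tidy is that your bound $l(p)\le q_0-2$ is stated only for primes $p>q_0$, so the line ``$Q_{i+1}\le (Q_i+(q_0-1)a)/2+2$ once $Q_i\le N_a$'' should separately note that primes $p<q_0$ divide $a$ (giving $l(p)=0$) and that $p=q_0$ yields $l(p)\le q_0-1$, which still keeps the orbit inside a fixed bounded interval.
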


We conclude with evidence towards a variant of a conjecture of Erd\H{o}s, Granville, Pomerance, and Spiro \cite{EGPS}, which states that the pre-image of any set of asymptotic density zero under $s(n)$ is again of asymptotic density zero. Namely, we prove as Theorem \ref{EGPS}, using previous work of Pollack, Pomerance, and Thompson \cite{PPT} on divisor-sum fibres:

\begin{thm}
Let $\epsilon=\epsilon(x)$ be a fixed function tending to 0 as $x\to\infty$, and let $\mathscr A$ be a set of at most $x^{\frac12+\epsilon(x)}$ positive integers. Then
\be
\#\{n\leq x : B(n)\in\mathscr A\} = o_\epsilon(x)
\ee
as $x\to\infty$, uniformly in the choice of $\mathscr A$.
\end{thm}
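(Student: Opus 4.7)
The plan is to adapt the divisor-sum-fibre method of Pollack, Pomerance, and Thompson \cite{PPT} from $s(n)$ to $B(n)$. The crucial structural observation is that when $n=pm$ with $p=P(n)$ the largest prime factor of $n$ and $p\nmid m$, one has
\be
B(n)=p+B(m);
\ee
although additive rather than multiplicative as in $s(n)=ps(m)+\sigma(m)$, this identity plays the same role of reducing the question to a count over the pair $(p,m)$.

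First I would discard an exceptional set of size $o(x)$. Choosing $y=x^{1/u}$ with $u=u(x)\to\infty$ slowly, the Dickman--de Bruijn estimate shows that the number of $n\leq x$ with $P(n)\leq y$ is $x\rho(u)+o(x)=o(x)$, and the count of $n\leq x$ with $p^2\mid n$ for some prime $p>y$ is $O(x/y)=o(x)$. For the remaining $n=pm$, the condition $B(n)\in\mathscr{A}$ is equivalent to $p\in\mathscr{A}-B(m)$ for a prime $p>y$ with $pm\le x$.

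The central step is a fibre-size estimate for $f(a):=|B^{-1}(a)\cap[1,x]|$ of the form
\be
\#\{a:f(a)\geq T\}\ll x(\log x)^{O(1)}/T^2 \qquad (T\geq 1),
\ee
obtained by adapting the corresponding bound in \cite{PPT} together with a sieve input on primes in the shifted set $\mathscr{A}-B(m)$ averaged over $m$. A dyadic decomposition of $\mathscr{A}$ by fibre size, balanced at the threshold $T^*=(x/|\mathscr{A}|)^{1/2}$, then gives
\be
\#\{n\leq x:B(n)\in\mathscr{A}\}\ll |\mathscr{A}|^{1/2}x^{1/2}(\log x)^{O(1)}=x^{3/4+\epsilon(x)/2+o(1)}=o_\epsilon(x).
\ee

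The main obstacle is proving the fibre-size bound in the additive setting. A large fraction of fibre collisions arises from Goldbach-type coincidences $p+B(m)=p'+B(m')$, equivalently $p-p'=B(m')-B(m)$, so the trivial second-moment bound on the energy $\sum_a f(a)^2$ is too weak. The resolution is to refine the decomposition by also peeling off the second-largest prime factor of $m$ and to apply a Brun--Selberg sieve to count pairs of primes in affine shifts of $\mathscr{A}$; this adaptation of the sieve step of \cite{PPT} to the additive functional $B$ is the technical crux of the argument.
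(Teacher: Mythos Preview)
Your approach diverges from the paper's, and the central estimate you rely on is false as stated. The level-set bound
\[
\#\{a:f(a)\ge T\}\ll \frac{x(\log x)^{O(1)}}{T^2}
\]
would force $\sum_a f(a)^2 \ll x(\log x)^{O(1)}$, but semiprimes alone already make this second moment of order $x^{3/2}$. Indeed, for every even $a\le 2\sqrt{x}$, each Goldbach representation $a=p+q$ gives $n=pq\le (a/2)^2\le x$ with $B(n)=a$; writing $r_2(a)$ for the number of such representations, the first moment $\sum_{a\le 2\sqrt{x}}r_2(a)\asymp x/(\log x)^2$ combined with the sieve upper bound $r_2(a)\ll \sqrt{x}/(\log x)^2$ shows that $\gg\sqrt{x}$ values of $a$ satisfy $f(a)\ge r_2(a)\ge \sqrt{x}/(\log x)^{3}$. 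Taking $T=\sqrt{x}/(\log x)^{3}$ in your claimed bound would allow only $(\log x)^{O(1)}$ such $a$, a contradiction. Peeling off a second prime factor and sieving does not repair this: the Goldbach collisions you yourself flag are genuine contributions to the energy, not artefacts of a crude estimate. With the true second moment $\asymp x^{3/2}/(\log x)^{O(1)}$, your dyadic balancing delivers only $(x^{3/2}|\mathscr A|)^{1/2}=x^{1+\epsilon(x)/2}$, which is not $o(x)$.

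The paper proceeds quite differently, adapting \cite{PPT} fibre by fibre rather than via a second moment. For each fixed $a\in\mathscr A$ it bounds the number of \emph{non-exceptional} preimages $n\le x$ directly, using the decomposition $n=de$ with $d$ the largest divisor of $n$ not exceeding $\sqrt{x}$ (not the largest-prime-factor splitting $n=pm$). Additivity gives $a/g\equiv B(e)/g\pmod{B(d)/g}$ with $g=\gcd(B(d),B(e))$, so for fixed $d$ the value $B(e)$ lies in a determined residue class; invoking $B(d)\ge\log d$ and summing over $d$ and over the $\tau(a)\le x^{1/\log\log x}$ divisors $g\mid a$ yields at most $x^{1/2-9\epsilon(x)}$ non-exceptional preimages of $a$. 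Multiplying by $|\mathscr A|\le x^{1/2+\epsilon(x)}$ then gives $o(x)$.
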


\subsubsection*{Acknowledgments}  The authors would like to thank Jeff Lagarias for suggesting the proof of Theorem \ref{finite}.

\section{Shifts}
\label{shifts}

In this section, we prove some basic results of $B_a(n)$ and $\beta_a(n)$. First, from the fact that $B(n^k)=kn$ it follows that if $n$ is composite then $B_a(n^k)=kn$. Also, $B_a(n)$ is close to additive in the sense that $B_a(mn)=B_a(n)+B_a(m)$ if and only if $m,n$ are not prime.


The following result shows that $B_a(n)$ is in certain ways similar to $B_0(n)$, independently of $a$.

\begin{prop}
\label{order}
(a) The average order $B_a(n)$ is $\dfrac{\pi^2 n}{6\log n}$. In other words,
\be
\sum_{n\leq x} B_a(n) \sim \frac{\pi^2 x^2}{12\log x}.
\ee
(b) The average order of $B_a(n) - \beta_a(n)$ is $\log\log n$. In other words,
\be
\sum_{n\leq x}(B_a(n) - \beta_a(n)) = x \log\log x + O(x).
\ee
(c) For any fixed integer $N$, the set $\#|B_a(n) - \beta_a(n) = N|$ has positive natural density. In other words,
\be
\lim_{n\to\infty} \frac{n}{B_a(n) - \beta_a(n)} > 0.
\ee
\end{prop}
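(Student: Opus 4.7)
The plan is to observe that the shift cancels in $B_a - \beta_a$ and contributes only a lower-order term to $B_a$ itself, so that all three parts reduce to classical estimates for the unshifted $B$ and $\beta$.

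The preliminary remark I would record is that $B_a(n) - \beta_a(n) = B(n) - \beta(n)$ for every $n$: on composite $n$ the shifts are absent, while on prime $n$ both $B_a(n)$ and $\beta_a(n)$ equal $n+a$, so the difference is $0$, agreeing with $B(p) = \beta(p) = p$. This immediately reduces (b) and (c) to statements about $B - \beta$. For (a), since $B_a$ and $B$ differ only at primes (and there by exactly $a$), I would write $\sum_{n\leq x} B_a(n) = \sum_{n\leq x} B(n) + a\pi(x)$ and invoke the Alladi--Erd\H{o}s asymptotic $\sum_{n\leq x} B(n) \sim \pi^2 x^2/(12\log x)$ from \cite{AE}; the correction $a\pi(x) = O(x/\log x)$ is absorbed into the asymptotic.

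For part (b), I would expand $B(n) - \beta(n) = \sum_{p\mid n}(v_p(n) - 1)p$ and use the identity $v_p(n) - 1 = \#\{k\geq 2 : p^k\mid n\}$ (valid when $p\mid n$) to swap the order of summation:
$$\sum_{n\leq x}(B(n)-\beta(n)) = \sum_{k\geq 2}\sum_{p\leq x^{1/k}} p\,\lfloor x/p^k\rfloor.$$
The dominant $k=2$ term evaluates to $x\sum_{p\leq\sqrt{x}} 1/p + O\bigl(\sum_{p\leq\sqrt{x}} p\bigr) = x\log\log x + O(x)$ via Mertens' theorem and Chebyshev's bound $\sum_{p\leq y}p = O(y^2/\log y)$; the $k\geq 3$ tail together contributes at most $O\bigl(x\sum_p p^{-2}\bigr) = O(x)$.

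For part (c), I would exploit the unique factorization $n = qm$ where $q = \prod_{v_p(n)\geq 2}p^{v_p(n)}$ is powerful and $m$ is squarefree with $\gcd(m,q) = 1$. Since only primes with $v_p(n)\geq 2$ contribute to the difference, $B(n) - \beta(n) = B(q) - \beta(q)$ depends only on the powerful part. Fixing $N$, the set $S_N = \{q \text{ powerful} : B(q) - \beta(q) = N\}$ is finite, as any $p\mid q \in S_N$ must satisfy $p\leq N$. Partitioning by $q\in S_N$ and counting squarefree $m\leq x/q$ coprime to $q$ by a standard sieve,
$$\#\{n\leq x : B(n) - \beta(n) = N\} = \sum_{q\in S_N}\frac{6x}{\pi^2 q}\prod_{p\mid q}\!\left(1 + \frac{1}{p}\right)^{-1} + o(x),$$
so the density equals the positive finite sum $\tfrac{6}{\pi^2}\sum_{q\in S_N} q^{-1}\prod_{p\mid q}(1+1/p)^{-1}$ whenever $S_N$ is nonempty (for $N=0$ this recovers $q=1$ and the squarefree density $6/\pi^2$). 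The only real obstacle is the Mertens/Chebyshev bookkeeping in (b); the substantive input in (c) is the powerful--squarefree splitting, after which the density calculation is routine.
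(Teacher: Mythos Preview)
Your reduction to the unshifted functions is exactly the paper's argument: the paper writes $\sum_{n\le x}B_a(n)=\sum_{n\le x}B(n)+a\pi(x)$, observes $B_a-\beta_a=B-\beta$, and then simply cites the corollaries to \cite[Theorem~1.1]{AE} for all three unshifted estimates. Where you differ is that you do not cite but instead reprove the Alladi--Erd\H{o}s results: for (b) you unfold $B-\beta$ as $\sum_{k\ge2}\sum_p p\lfloor x/p^k\rfloor$ and apply Mertens and Chebyshev, and for (c) you use the powerful--squarefree factorisation to compute the density explicitly. Both of these arguments are correct and standard, and they make your write-up self-contained at the cost of a few extra lines; the paper's version is shorter but relies on the reader having \cite{AE} to hand. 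Your parenthetical ``whenever $S_N$ is nonempty'' in (c) is in fact a sharpening of the paper's formulation, since $B(n)-\beta(n)=1$ never occurs and so the density at $N=1$ is zero rather than positive.
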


\begin{proof}
These follow simply from the fact that
\be
\sum_{n\leq x} B_a(n) = \sum_{n\leq x} B(n) + a \sum_{p\leq x} 1
\ee
and similarly for $\beta_a(n)$, then applying the corollaries to \cite[Theorem 1.1]{AE}. The additional sum over $p$ is $a\pi(x)$, which gives a smaller error $ax/\log x$. For the last two claims, we also note that $B_a(n) - \beta_a(n)=B(n) - \beta(n).$
\end{proof}

Let $f(n)$ be an arithmetic function taking positive integer values. Then for any fixed $N\geq 0$, the local density of $f(n)$ is defined to be
\be
d_N = \lim_{x\to \infty} \frac{1}{x}\#|n\leq x : f(n) = N|.
\ee
Then we may deduce the following result from \cite[\S3]{Iv}:

\begin{cor}
There exists $d_N$ such that
\be
\#\{n\leq x : B_a(n)-\beta_a(n) = N\} = d_N x + O(x^\frac12\log x)
\ee
and
\be
\#\{n\leq x : B_a(n)-\beta_a(n) \geq N\} \ll \frac{x}{N}.
\ee
uniformly in $N>0$.
\end{cor}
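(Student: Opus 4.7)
The first move is to eliminate the parameter $a$ entirely: as noted in the proof of Proposition~\ref{order}, $B_a(n)-\beta_a(n) = B(n)-\beta(n) =: f(n)$ for every $n$, so it suffices to analyse the unshifted function $f(n) = \sum_{p^r \| n,\, r \geq 2}(r-1)p$. The structural fact that drives the whole argument is that $f$ vanishes on squarefree integers and depends only on the squarefull part of $n$: writing $n = ks$ with $k$ squarefull, $s$ squarefree, and $\gcd(k,s)=1$ (the unique such decomposition), we have $f(n) = f(k)$. Hence the level sets of $f$ on $[1,x]$ are indexed by squarefull integers $k$, which is the source of the sharp $\sqrt{x}\log x$ error term.

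For the main-term asymptotic, fix a squarefull $k$ with $f(k)=N$ and count the admissible $s$ by the standard squarefree sieve:
\[
\#\{s \leq x/k : s \text{ squarefree},\ \gcd(s,k)=1\} = c_k\,\frac{x}{k} + O\!\bigl(\sqrt{x/k}\bigr),
\]
where $c_k = \tfrac{6}{\pi^2}\prod_{p \mid k}\tfrac{p}{p+1}$. Summing over such $k$ yields a main term $d_N x$ with
\[
d_N = \sum_{\substack{k \text{ squarefull} \\ f(k)=N}} \frac{c_k}{k},
\]
which converges because the Dirichlet series $\sum_{k \text{ squarefull}} k^{-s}$ equals $\zeta(2s)\zeta(3s)/\zeta(6s)$ and is finite for $\Re s > \tfrac12$. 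The sparsity bound $\#\{k \leq y : k \text{ squarefull}\} \ll \sqrt y$ combined with partial summation controls the cumulative error $\sum_{k \leq x}\sqrt{x/k}$ by $O(\sqrt x \log x)$.

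The main obstacle is the uniform tail bound, since a direct Markov argument from Proposition~\ref{order}(b) only yields the weaker $\#\{n \leq x : f(n) \geq N\} \ll x\log\log x /N$. Following the strategy of \cite[\S3]{Iv}, I would reduce to estimating $\sum_{k \text{ squarefull},\, f(k)\geq N} k^{-1}$ and dissect by the prime-power factors $p^r \| k$ with $r \geq 2$: the contribution from $k$ admitting a single prime power with $(r-1)p \gtrsim N$ is controlled by
\[
\sum_{r \geq 2}\sum_{p \geq N/(r-1)} p^{-r} \ll \frac{1}{N},
\]
while the diffuse contributions, where several small prime powers conspire to make $f(k) \geq N$, force $k$ itself to be sufficiently large that the squarefull-density tail $\sum_{k \text{ squarefull},\, k \geq M} k^{-1} \ll M^{-1/2}$ absorbs them. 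Multiplying by $x$ then produces the desired bound $\ll x/N$ uniformly in $N > 0$.
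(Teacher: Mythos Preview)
Your proposal is correct and follows essentially the same route as the paper: the paper's entire argument is the reduction $B_a(n)-\beta_a(n)=B(n)-\beta(n)$ followed by a direct citation of \cite[\S3]{Iv}, and what you have written is precisely that reduction together with a sketch of Ivi\'c's squarefull/squarefree decomposition argument. One minor caution: the individual error in the coprime-squarefree count is $O\bigl(2^{\omega(k)}\sqrt{x/k}\bigr)$ rather than $O(\sqrt{x/k})$, but since $2^{\omega(k)}\ll k^{\epsilon}$ for squarefull $k$ this is harmless for the stated bound.
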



The next theorem shows a departure from the unshifted case. In fact, we find a qualitative difference depending on whether $a$ is 0, even, or odd. Recall that in the case of $a=0$, we have that $B_0(n)$ and $\beta_a(n)$ are additive, and therefore uniformly distributed for all $q$ by Delange's theorem (c.f. \cite{G}). 

\begin{thm}
\label{dist}
(a) Let $q=2$. Then $B_a(n)$ is uniformly distributed \textnormal{mod} $q$ if $a$ is even. That is, there is a constant $c_0>0$ such that
\be
\sum_{n=1}^\infty (-1)^{B_a(n)} = O(x\exp(c_0(\log x)^\frac23).
\ee
If $a$ is odd, we have instead $O(x)$, as $x\to\infty$.

(b) Let $q,h$ be fixed integers with $q>2$. Then for any $a$ such that $q|ha$,  $B_a(n)$ is uniformly distributed mod $q$.
\end{thm}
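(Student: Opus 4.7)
The plan is to reduce everything to known character-sum estimates for the strongly additive Alladi-Erd\H{o}s function $B(n)$ by exploiting the elementary identity
\be
B_a(n) = B(n) + a\cdot \mathbf{1}_{\{n\text{ prime}\}},
\ee
with the residual prime contribution controlled by the prime number theorem. Writing $e(x)=e^{2\pi i x}$, uniform distribution modulo $q$ is equivalent by Weyl's criterion to showing
\be
S_h(x):=\sum_{n\leq x} e(hB_a(n)/q) = o(x)
\ee
for each $h\in\{1,\dots,q-1\}$, and the identity above gives the decomposition
\be
S_h(x) = \sum_{n\leq x} e(hB(n)/q) \;+\; \bigl(e(ha/q)-1\bigr)\sum_{p\leq x} e(hp/q),
\ee
so everything reduces to estimating the two sums on the right.

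For part (a), with $q=2$, the first sum is $O(x\exp(-c_0(\log x)^{2/3}))$ by the Selberg-Delange method (or Landau-Walfisz-type bounds) applied to the completely multiplicative $\pm 1$-valued function $(-1)^{B(n)}$; the Delange hypothesis $\sum_p(1-(-1)^p)/p=\infty$ is immediate since $(1-(-1)^p)/p = 2/p$ for each odd prime. When $a$ is even, the prefactor $e(a/2)-1$ vanishes and we inherit this bound. When $a$ is odd, the correction contributes $-2\sum_{p\leq x}(-1)^p = O(\pi(x)) = O(x/\log x)$ by PNT, yielding the claimed $O(x)$ bound (in fact a stronger one) but destroying the Selberg-Delange rate.

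For part (b), the hypothesis $q\mid ha$ forces $e(ha/q)=1$, so the prime correction vanishes outright and $S_h(x)$ reduces to $\sum_{n\leq x}e(hB(n)/q)$. This is $o(x)$ by Delange's theorem applied to the strongly additive function $B$, once we verify that the Delange series
\be
\sum_p \frac{1-\cos(2\pi hp/q)}{p}
\ee
diverges for each $h\not\equiv 0\pmod q$. The divergence follows from the prime number theorem in arithmetic progressions, which makes primes equidistribute among the reduced residue classes modulo $q$, so $1-\cos(2\pi hp/q)$ is bounded below by a positive constant on a positive-density subset of primes. The principal technical point is this verification of the Delange hypothesis; the reduction to it via the decomposition above is routine.
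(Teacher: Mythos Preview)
Your argument is correct and rests on the same decomposition the paper uses: split off the prime correction via $B_a(n)=B(n)+a\,\mathbf{1}_{\{n\text{ prime}\}}$, so that the Weyl sum for $B_a$ differs from that for $B$ by $(e(ha/q)-1)\sum_{p\le x}e(hp/q)$, which vanishes under the stated parity/divisibility hypotheses. The difference lies only in how the remaining sum $\sum_{n\le x}e(hB(n)/q)$ is handled. For $q=2$ the paper simply quotes \cite[Theorem~3.1]{AE}, which is the same Selberg--Delange-type input you invoke; for the odd-$a$ case the paper records the Dirichlet-series identity with the prime zeta function $P(s)$ rather than bounding the partial sum directly as you do, but the content is the same. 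For $q>2$ the paper instead appeals to a quantitative result of Goldfeld \cite[Theorem~1.2]{G}, obtaining $\#\{n\le x:B(n)\equiv h\bmod q\}=x/q+O(x/(\log x)^{1/2})$, whereas your route through Delange's theorem yields only $o(x)$. Your approach is more classical and self-contained---the verification of the Delange hypothesis via equidistribution of primes in residue classes is clean---but forfeits the explicit error term. One small terminological slip: $B$ is \emph{completely} additive, not strongly additive; this does not affect the argument, since the associated function $n\mapsto e(hB(n)/q)$ is completely multiplicative either way, which is what both Delange's and Goldfeld's theorems require.
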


\begin{proof}
First consider $q=2$. Notice that $(-1)^{B_a(p)}= (-1)^{B(p)}(-1)^a$, so the even case follows directly from \cite[Theorem 3.1]{AE}, so it suffices to consider the case with $a$ odd. We have the Dirichlet series
\be
\sum_{n=1}^\infty\frac{(-1)^{B_a(n)}}{n^s} = \sum_{n=1}^\infty\frac{(-1)^{B(n)}}{n^s} - 2\sum_{p} \frac{1}{p^s}
= \frac{2^s+1}{2^s-1}\frac{\zeta(2s)}{\zeta(s)} - 2P(s)
\ee
where $P(s)=\sum_{p} p^{-s}$ denotes the so-called prime zeta function, which is essentially $\log\zeta(s)$ as $s\to1^+$.

The claim for $q>2$ follows from a recent result of Goldfeld: by assumption the character $\exp(2\pi i \frac{hB_a(n)}{q}) = \exp(2\pi i \frac{hB(n)}{q})$ is completely multiplicative, and thus we may apply \cite[Theorem 1.2]{G} to get
\be
\sum_{\substack{n\leq x\\ B(n) \equiv h\textnormal{ mod } q}}1 = \frac{x}{q} + O\left(\frac{x}{(\log x)^\frac12}\right)
\ee
as $x\to\infty$, which gives the result.
\end{proof}

From the above we observe that the lack of additivity leads to obstructions to uniform distribution, which in turn leads to a departure from certain proofs related to iterates of $B_a(n)$ and $\beta_a(n)$, which we will discuss next. Nonetheless, we can conclude:

\begin{cor}
If $a$ is even, then 
\be
\sum_{n=1}^\infty\frac{(-1)^{B_a(n)}}{n}=0.
\ee
\end{cor}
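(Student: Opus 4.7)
The plan is to reduce the sum to the unshifted case $a=0$ and then evaluate the resulting Dirichlet series $F(s) = \sum_{n} (-1)^{B(n)}/n^s$ at $s=1$ by an Abelian argument. Since $a$ is even, $(-1)^{B_a(p)} = (-1)^{p+a} = (-1)^{B(p)}$ for every prime $p$, and trivially $(-1)^{B_a(n)} = (-1)^{B(n)}$ for composite $n$. It therefore suffices to show that $\sum_{n=1}^{\infty} (-1)^{B(n)}/n$ converges and equals zero.

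Next, the cancellation estimate $S(x) := \sum_{n \leq x} (-1)^{B(n)} = O(x \exp(-c_0 (\log x)^{2/3}))$ furnished by Theorem \ref{dist}(a) (i.e.\ \cite[Theorem 3.1]{AE}) yields, via partial summation,
\[
\sum_{n \leq N} \frac{(-1)^{B(n)}}{n} = \frac{S(N)}{N} + \int_1^N \frac{S(t)}{t^2}\,dt,
\]
and after the change of variables $u = \log t$ one sees that $\int_1^\infty S(t)/t^2\,dt$ converges absolutely. Thus the series at $s=1$ converges.

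By Abel's continuity theorem for Dirichlet series, the limit $\lim_{s \to 1^+} F(s)$ coincides with this convergent value. Because $(-1)^{B(n)}$ is completely multiplicative (using $B(p^k) = kp$), for $\Re(s) > 1$ a direct Euler product computation recovers the identity
\[
F(s) = \frac{2^s+1}{2^s-1}\cdot\frac{\zeta(2s)}{\zeta(s)}
\]
already appearing in the proof of Theorem \ref{dist}(a). As $s \to 1^+$ we have $(2^s+1)/(2^s-1) \to 3$ and $\zeta(2s) \to \zeta(2)$, while $\zeta(s) \to \infty$; hence $\lim_{s \to 1^+} F(s) = 0$. The only genuine subtlety is the Abelian step of interchanging limit and summation, but the strong cancellation bound makes convergence at $s = 1$ automatic, so Abel's theorem applies without any Tauberian input.
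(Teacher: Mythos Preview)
Your argument is correct and is precisely the one the paper leaves implicit: reduce to $a=0$ via $(-1)^{B_a(n)}=(-1)^{B(n)}$ for even $a$, use the partial-sum bound of Theorem~\ref{dist}(a) with partial summation to get convergence at $s=1$, and then read off the value $0$ from the identity $F(s)=\dfrac{2^s+1}{2^s-1}\cdot\dfrac{\zeta(2s)}{\zeta(s)}$ already recorded in the proof of Theorem~\ref{dist}. There is nothing to add.
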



\section{Iterates}
\label{iterates}

Now we turn to the dynamics of iterates of $B_a(n)$ and $\beta_a(n)$. While most of the results below concern $B_a(n)$, many of them can be carried over to $\beta_a(n)$ without difficulty, which we leave to the interested reader. Throughout we will also raise several problems regarding the sequences $B^k_a(n)$, which can also be posed for $\beta^k_a(n)$, suggested by the numerical computations.

\subsection{Cycles}
\label{cycles}

We call $n$ a periodic point if $B_a^{k+l}(n)=B_a^k(n)$ for some $l$, and eventually periodic if $B_a^k(n)$ is periodic for some $k$. Define a cycle to be the orbit of a periodic point $n$. We will sometimes refer to the fixed points $B_a(n)=n$ as trivial cycles. 

\begin{lem}
For any $a$ nonzero, $B_a(n) = n$ if and only if $n=4$.
\end{lem}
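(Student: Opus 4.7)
The plan is to separate by the prime structure of $n$ and reduce everything to the classical fact that $B(n)\le n$ for composite $n$, with equality only at $n=4$.

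First, since $a\neq 0$, if $n$ is prime then $B_a(n)=n+a\neq n$, so no prime can be a fixed point; the case $n=1$ is also immediate, since $B_a(1)=B(1)=0$. Any fixed point must therefore be composite, in which case $B_a(n)=B(n)$, and we are reduced to solving $B(n)=n$ for composite $n$. A direct check shows $B(4)=2\cdot 2=4$, so $n=4$ is indeed fixed, which gives the ``if'' direction.

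Next, I would establish the elementary inequality $rp\le p^r$ for every prime $p$ and every integer $r\ge 1$, with equality if and only if $r=1$ or $(p,r)=(2,2)$. This is equivalent to $p^{r-1}\ge r$, which follows by a short induction on $r$ (the base $r=1,2$ is immediate, and $p^{r}\ge 2p^{r-1}\ge 2r\ge r+1$ handles the step). From this I would conclude:

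\textbf{Case 1:} $n=p^r$ is a prime power with $r\ge 2$. Then $B(n)=rp$, and $B(n)=n$ forces $(p,r)=(2,2)$, i.e.\ $n=4$.

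\textbf{Case 2:} $n$ has at least two distinct prime factors. Write $n=ab$ with $\gcd(a,b)=1$ and $a,b\ge 2$. By the coprime additivity noted at the start of Section 2, $B(n)=B(a)+B(b)$, and applying the inequality above prime-power-wise to each factor gives $B(a)\le a$ and $B(b)\le b$, so $B(n)\le a+b$. Because $\gcd(a,b)=1$ and both are at least $2$, they cannot both equal $2$, so $(a-1)(b-1)\ge 2$, which rearranges to $a+b<ab=n$. Hence $B(n)<n$, and there is no fixed point in this case.

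The only mildly delicate point is the strict inequality in Case 2: individually one can have $B(a)=a$ (whenever $a$ is prime), but the coprimality together with $a,b\ge 2$ forces $a+b<ab$, and this is what kills every composite fixed-point candidate other than $4$. Everything else is a routine induction on $r$ in the key inequality $p^{r-1}\ge r$.
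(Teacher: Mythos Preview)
Your argument is correct and follows exactly the paper's route: rule out primes (using $a\neq0$), reduce the composite case to $B(n)=n$, and then use that $B(n)<n$ for every composite $n>4$; the paper simply quotes this last inequality as known, whereas you actually prove it via $rp\le p^r$ and a two-factor split.

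One small point to tighten in Case~2: you write $n=ab$ with $\gcd(a,b)=1$, $a,b\ge2$, and then claim $B(a)\le a$, $B(b)\le b$ ``prime-power-wise'', but $a$ and $b$ need not be prime powers (e.g.\ $n=30$ forces one factor to have two primes). Either recast the argument as an induction on the number of distinct prime factors of $n$, or bypass the split entirely by observing
\[
B(n)=\sum_i r_ip_i \le \sum_i p_i^{r_i} \le \prod_i p_i^{r_i}=n,
\]
where the first inequality is your $rp\le p^r$ and the second (strict when $k\ge2$) is the elementary sum--product bound for integers $\ge2$, which is exactly your $(a-1)(b-1)\ge1$ step iterated.
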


\begin{proof}
Primes are not fixed points by definition. So the only possible fixed points are those of $B(n)$ for $n$ composite, which is $n=4$, as $B(n)<n$ for any composite $n>4$.
\end{proof}

\subsubsection{The first case} 

Recall that the stopping time for $n$, denoted $\sigma(n)$ is defined to be $\inf\{k: B^k_a(n)<n\}$. We will also define the total stopping time for $n$, denoted $\sigma_\infty(n)$ to be the number of iterates required for $B^k_a(n)$ to enter a cycle.

\begin{prop}
$B_1^k(n)$ is eventually periodic for all $n$, with cycles $(4)$ and $(5,6)$. 
\end{prop}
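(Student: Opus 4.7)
The plan is a strong induction on $n$, leaning on the preceding lemma that the only composite fixed point of $B_1$ is $n=4$, which in particular gives $B(n) \leq n-1$ for every composite $n \geq 5$. I would first verify by direct computation that the two named sets are cycles: $B_1(4) = B(4) = 4$, and $B_1(5) = 5+1 = 6$ (since $5$ is prime) while $B_1(6) = B(6) = 2+3 = 5$. Then I would dispose of the base cases $n \leq 6$ by hand, observing that each such orbit enters $(4)$ or $(5,6)$.

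For the inductive step, fix $n \geq 7$ and assume the claim for all smaller positive integers. If $n$ is composite, then $B_1(n) = B(n) \leq n-1$, and the inductive hypothesis applies directly. If $n$ is prime, then $B_1(n) = n+1$ is even and at least $8$, hence composite, so $B_1^2(n) = B(n+1)$. The crux is the bound $B(n+1) < n$. Writing $n+1 = 2^a m$ with $m$ odd and $a \geq 1$, and using the elementary comparison $B(m) \leq m$, one obtains
\be
B(n+1) \;=\; 2a + B(m) \;\leq\; 2a + m.
\ee
A short case check on $(a,m)$ — isolating $m=1$ (which forces $a \geq 3$ since $n+1 \geq 8$), the small cases $a=1,2$ with $m \geq 3$, and finally $a \geq 3$ with $m \geq 3$ — confirms the strict inequality $2a + m < 2^a m - 1 = n$ for every prime $n \geq 7$. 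Hence $B_1^2(n) < n$, and the inductive hypothesis finishes this case too.

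The induction proves both assertions simultaneously: every orbit is eventually periodic, and since every starting value lands in $\{4\}$ or $\{5,6\}$, no further cycles exist. The only non-routine ingredient is the inequality $B(n+1) < n$ in the prime case, but because $n+1$ is always even — and in practice richly $2$-divisible — when $n$ is an odd prime, $B(n+1)$ is considerably smaller than $n$, and the case analysis is a brief arithmetic check rather than a genuine obstacle.
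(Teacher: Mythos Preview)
Your argument is correct and follows essentially the same descent as the paper: both reduce to showing $B_1^2(p)<p$ for primes $p\ge 7$, then induct (equivalently, the paper's stopping-time phrasing). The only difference is cosmetic: the paper factors out a single $2$, writing $p+1=2m$ and using $B(p+1)=2+B(m)\le 2+m<p$ in one line, whereas your full $2$-adic decomposition $n+1=2^a m$ with $m$ odd leads to a small case analysis that is not really needed.
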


\begin{proof}
This is easy to check the orbits for small $n$, say $n\leq 6$, so we may assume that $n>6$. We first claim that $B^k_1(n)$ has stopping time $\sigma(n)=2$ if $n$ is prime, otherwise $\sigma(n)=1$. It suffices to check for $n=p$. Set $p+1=2m$. Then 
\be
B^2_1(p)=B_1(p+1)=2 + B_1(m)\leq 2+m < p. 
\ee
It follows from the claim that $B^k_1(n) < n$ for all $k>1$, thus $B^{2k}_1(n)$ is strictly decreasing until it reaches the cycle $(5,6)$.
\end{proof}

\begin{rem}
The proof above fails for $a>1$, since $p+a$ may be prime if $a$ is even, while the final inequality $2+m<p$ is no longer guaranteed if $a$ is odd. 
\end{rem}

\subsubsection{Amicable pairs} 
Do all primes above 3 occur in some amicable pair, i.e., 2-cycle for some $a$? We can answer this in the affirmative.

\begin{prop}
\label{amicable}
Every $p>3$ occurs in a $2$-cycle $(p,B_a(p))$ for some integer $a$.
\end{prop}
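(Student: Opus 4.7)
The plan is to reduce the problem to a concrete arithmetic question and solve it by explicit construction. Since $p$ is prime we have $B_a(p) = p + a$, so the orbit $(p, B_a(p))$ being a 2-cycle is equivalent to choosing $a$ so that $q := p + a$ satisfies $B_a(q) = p$. If we can arrange $q$ to be composite, then $B_a(q) = B(q)$, and so it suffices to produce a composite integer $q$ with $B(q) = p$ and then set $a := q - p$.

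The task thus reduces to exhibiting, for each prime $p > 3$, a composite $q$ with $B(q) = p$. I would do this by the explicit choice
\be
q = 3 \cdot 2^{(p-3)/2},
\ee
which is a positive integer because $p$ is odd, and which is composite because $(p-3)/2 \geq 1$ for $p \geq 5$. Reading off the factorization of $q$ gives
\be
B(q) = 2 \cdot \tfrac{p-3}{2} + 3 = p,
\ee
as required. Setting $a = q - p = 3 \cdot 2^{(p-3)/2} - p$, one verifies $a > 0$ (since $3 \cdot 2^{(p-3)/2} > p$ for all $p \geq 5$), $B_a(p) = p + a = q$, and $B_a(q) = B(q) = p$, so $(p,q)$ is indeed a 2-cycle for $B_a$.

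There is essentially no obstacle beyond finding the right shape of $q$; the only mild subtlety is making sure $q$ is composite (so that $B_a$ acts as $B$ on it) and distinct from $p$ (so that one really has a 2-cycle and not a fixed point), both of which are transparent for the choice above. It is worth remarking that the same construction would not work for $p = 3$, since $(p-3)/2 = 0$ forces $q = 3 = p$, matching the fact that no composite $q$ satisfies $B(q) = 3$; this is why the statement excludes $p = 2, 3$.
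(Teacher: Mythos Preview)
Your proof is correct. The reduction to finding a composite $q$ with $B(q)=p$ and then setting $a=q-p$ is exactly the approach the paper takes, so up to that point the arguments coincide.

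Where you diverge is in the construction of the composite witness $q$. The paper takes $q'$ to be the largest prime below $p$ and builds $n$ from the prime factorization of $p-q'$ (with a small case split depending on whether $p-q'$ is itself prime); you instead write down the single closed-form choice $q = 3\cdot 2^{(p-3)/2}$. Your construction is cleaner and avoids any case analysis; the paper's construction, on the other hand, yields a much smaller shift (indeed $a < p$ by Bertrand's postulate, and the paper remarks that a slight variant produces the \emph{smallest} composite preimage of $p$), whereas your $a$ grows exponentially in $p$. Since the statement only asks for existence of some $a$, either construction settles it.
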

\begin{proof}
We have to produce an $a$ such that $B_a(p+a)=p$. This forces $p+a$ to be composite, so it suffices to ask if $B_0(p+a)=p$ for some $a$. Let $n$ be a composite solution to $B_0(n)=p$. Then setting $a=n-p$, we have that $B_a: p \mapsto n \mapsto p$ as desired.

It remains to show that $B_0(n)=p$ always has a composite solution. Let $q$ be the largest prime less than $p$. If $p-q$ is prime, then choose $n=q(p-q)$. If not, then let $a$ be a prime dividing $p-q$, and write $p-q=ab$. Then choose $n=qa^b$.
\end{proof}

\begin{rem}
Observe that if $a$ was chosen to be the largest prime dividing $p-q$, then the construction will in fact produce the smallest composite solution $n$. 
\end{rem}

\subsubsection{Finite cycles}
How many kinds of cycles appear for fixed $a$? Up to $a\leq 200$, we found at most 4 distinct nontrivial cycles, at $a=39$, we have:
\[
(43, 82),(13,52,17,56),(7,46,25,10),(5, 44, 15, 8, 6)
\]
Note that from the above, we see that different $k$-cycles can occur for a fixed $k$ and $a$. 

It is natural to consider a variant of the Catalan-Dickson conjecture for $s(n)$: Does the sequence $B^k_a(n)$ ever escape to infinity as $k\to\infty$? Heuristically, the sequence $B_a(n)/n$ has average order $\pi^2/6\log n < 1$ by Proposition \ref{order}(a),\footnote{In contrast to $s(n)/n$ which is slightly greater than 1 on average.} which suggests that an unbounded sequence should not exist. We show that this is indeed true:

\begin{thm}
\label{finite}
There exists finitely many cycles for each $a$, and all integers iterate to cycles. 
\end{thm}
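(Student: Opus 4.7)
The plan is to exhibit a finite set $S_a$ of positive integers, depending only on $a$, that contains every cycle, and to show that every orbit is bounded; together these yield both parts of the theorem.

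Two preliminary bounds will drive the argument. First, for every composite $n \geq 4$ the inequality $B(n) \leq n/2 + 2$ holds; I would verify this by splitting into $n = p^r$, $n = 2p$, and $n$ with at least two odd prime divisors, with equality occurring only at $n \in \{4, 8\} \cup \{2p : p \text{ prime}\}$. Second, let $q_a$ denote the smallest prime not dividing $a$, which exists and is finite since only finitely many primes divide $a$. For any prime $p > q_a$, because $\gcd(a, q_a) = 1$ the residues $ja \pmod{q_a}$ for $j = 1, \ldots, q_a - 1$ exhaust the nonzero residues modulo $q_a$, so some $j \in \{1, \ldots, q_a - 1\}$ satisfies $q_a \mid p + ja$; since $p + ja > q_a$, this element is composite.

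I would then decompose any orbit into \emph{climbs} and \emph{descents}. A climb from a prime $p$ consists of the consecutive values $p, p + a, p + 2a, \ldots, p + j_0 a$ (all prime except the last), where $j_0$ is the smallest positive index making the value composite; the second bound gives $j_0 \leq q_a - 1$ whenever $p > q_a$. The ensuing descent applies $B$ to the composite peak $y := p + j_0 a$ and continues through any intermediate composites (each iteration strictly decreasing, by the first bound) until the next prime $p'$ is reached, or the fixed point $4$ is hit, yielding the trivial cycle. Chaining gives $p' \leq B(y) \leq y/2 + 2 \leq (p + (q_a - 1)a)/2 + 2$. Setting $K := (q_a - 1) a + 4$, whenever $p > K$ this yields $p' < p$ strictly, so in any orbit the sequence of climb-start primes is strictly decreasing as long as it exceeds $K$.

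In a periodic orbit the climb-start primes repeat and so cannot decrease indefinitely, forcing them all to lie in $[1, K]$; every peak is then at most $K + (q_a - 1) a = 2(q_a - 1)a + 4$, so every cycle is contained in the finite set $S_a := \{1, 2, \ldots, 2(q_a - 1)a + 4\}$. For an arbitrary orbit starting at $n$, the same analysis bounds it above by $\max(n, K) + (q_a - 1)a$, confining it to a finite region of $\Z$ and hence making it eventually periodic. The main technical obstacle will be ensuring the descent chain $p' \leq B(y) \leq y/2 + 2$ remains valid across the intermediate composites between the peak $y$ and the next prime $p'$, which it does because $B$ strictly decreases composites greater than $4$; and verifying the uniform bound $B(n) \leq n/2 + 2$ across all composite cases. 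The small-prime regime $p \leq q_a$ and the fixed point $n = 4$ are absorbed automatically into $S_a$.
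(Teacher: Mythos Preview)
Your proposal is correct and follows essentially the same approach as the paper: both arguments hinge on the bound $B(n)\le n/2+2$ for composite $n$ and use the smallest prime $q_a$ not dividing $a$ to cap the length of any run of primes in arithmetic progression, concluding that a sufficiently large prime must map, after one climb and one descent step, to a strictly smaller value. Your climb bound $j_0\le q_a-1$ (using $p>q_a$) is marginally sharper than the paper's $j_0\le s$, and your explicit climb/descent framing is a bit more detailed, but the underlying idea and the resulting constants are the same up to these cosmetic differences.
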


\begin{proof}
It suffices to show that for any shift $a$ that there is a bound $C(a)$ such that all orbits of iterating $B_a$ from any starting point $n$ enter the range $[1, C(a)]$. Without loss of generality take $n$ be a prime $p$. We will show that for $p$ large enough, the next iterate of $p$ which goes downhill will go to a number less than $p$.

Let $s$ be the smallest prime that does not divide $a$, which is less than $2a$ by Bertrand's postulate. Then $a$ mod $s$  is nonzero, hence relatively prime to it. On the other hand, at least one of $p+a,p+2a,\dots, p+sa$ is divisible by $s$, and therefore composite. Denote it by $p+ka$. 

We claim that for composite $n$, $B_a(n) \le 2 + \frac{n}{2}$. Assuming this, then whenever $p$ is such that
\be
B_a(p+ka)\le 2+ \frac{p+ sa}{2}< p
 \ee
then we will go downhill.  Now since $s < 2p$, we may choose $C(a)=2a^2 + 10 < p.$

It remains to prove the claim. Since $n$ is composite, it suffices to prove it for $a=0$. Let $n = p^kq$ where $p$ is the smallest prime factor of $n$, with $(p,q)=1, q>1$. (If $q=1$ then $B(p^k) = kp$ and we are done.) By additivity, we have then:
\begin{align}
B(n)  &= kp + B(q) \\
& \le kp + q \\
& \le 2 + (\frac{p^k}{2}-1)q + q = 2 + \frac{n}{2}.
\end{align}
The last inequality follows since $2 + (\frac{p^k}{2}-1)q \ge p^k$, and $q >1$. 
\end{proof}

The following table lists the distinct nontrivial cycles found for small $a$ and checking $n$ up to $10^6$.
\begin{table}[ht!]
  \centering
  \caption{Nontrivial cycles for $n\leq 10^6$.}
  \label{tab:table1}
  \begin{tabular}{c| l | c| l}
    \toprule
    $a$ & cycles & $a$& cycles \\
    \midrule
    1 & $(5,6)$				&11&$(5,15,8,6)$\\
    2 & $(5,7,9,6)$			&12&$(5, 17, 29, 41, 53, 65, 18, 8, 6)$\\
    3 & $(5,8,6),(7,10)$		&13&$(5,16,8,6)$\\
    4 & $(5,9,6)$				&14&$(5, 19, 33, 14, 9, 6), (7, 21, 10)$ \\
    5 & $(7,12)$				&15&$(5,20,9,6),(19,34)$\\
    6 & $(7,13,19,25,10)$		&16&$(7, 23, 39, 16, 8, 6, 5, 21, 10)$\\
    7 & $(5,12,7,14,9,6)$		&17&$(7, 24, 9, 6, 5, 22, 13, 30, 10),(11,28)$\\
    8 & $(5,13,21,10,7,15,8,6)$	&18&$(5, 23, 41, 59, 77, 18, 8, 6),(7, 25, 10)$ \\
    9 & $(5,15,9,6),(13,22)$	&19&$(5, 24, 9, 6)$ \\
    10 & $(5,15,8,6)$			&20&$(5, 25, 10, 7, 27, 9, 6)$\\
    \bottomrule
  \end{tabular}
\end{table}

\subsubsection{Cycle length} 
What are the lengths of cycles, and how do they depend on $a$ and $n$? What are the stopping times $\sigma(n)$ and $\sigma_\infty(n)$? We have not studied this question in detail, but numerical experiments suggest that both times are small, relative to $s(n)$, for example.

\subsubsection{Sign patterns}
Any $k$-cycle $(n,B_a(n),\dots,B^k_a(n))$ can be ordered so that $n$ is the least term in the sequence, making $n$ prime and $B^k_a(n)$ composite. We adopt the following notation: we will assign $\{+,-\}$ to denote in a cycle whether a number is prime or composite. For example, the cycle $(5,7,9,6)$ in $a=2$ has sign pattern $(+,+,-,-)$.

We can now pose the following question: what are the possible sign patterns allowed in a $k$-cycle? All non-trivial cycles of length $k>2$ must have sign pattern of the form $(+,\dots,-)$. Do all combinations occur in between? For example, with $k=3$ we find both combinations $(+,+,-)$ and $(+,-,-)$ to occur. 


\subsection{Ascending chains}

Looking in the other direction, a number $n$ is called abundant if the sum of proper divisors $s(n)>n$, and it was shown in an unpublished work of Lenstra, and later improved by Erd\H{os} \cite{E}, that for every $K$ there is an $n$ for which
\be
\label{abundantchain}
n<s(n) < \dots < s^K(n).
\ee
Our $B_a(n)$ are simpler in the sense that $B_a(n)>n$ if and only if $n$ is prime and $a>1$, but on the other hand the analog of Lenstra's result requires the Green-Tao theorem \cite[Theorem 1.1]{GT}, which implies that there are arbitrarily long arithmetic progressions of primes, since such a progression yields primes $B_a^k(p) = p+ka$. Nonetheless we can immediately conclude:

\begin{cor}
\label{GT}
For any $k>0$, there exists integers $n$ and $a$ such that
\be
n<B_a(n) < \dots < B_a^k(n), \text{ and }\ n<\beta_a(n) < \dots < \beta_a^k(n).
\ee
\end{cor}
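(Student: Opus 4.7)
The plan is to deduce this directly from the Green--Tao theorem, as hinted in the paragraph preceding the statement. The starting observation is that $B_a(p) = \beta_a(p) = p+a$ whenever $p$ is prime, by the very definition of the shifts. So if I can guarantee that $p, p+a, p+2a, \ldots, p+ka$ are \emph{all} prime, then taking $n = p$ forces $B_a^i(n) = \beta_a^i(n) = p + ia$ for every $0 \le i \le k$, and strict monotonicity follows at once from $a > 0$.

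The first step is therefore to invoke Green--Tao \cite{GT} to supply, for each fixed $k > 0$, an arithmetic progression of $k+1$ primes with some positive common difference $a$. Relabel the smallest as $p$ and set $n = p$, so that the progression becomes $p, p+a, p+2a, \ldots, p+ka$.

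The second step is a one-line induction on $i$. The base case is $B_a^0(n) = p$. For the inductive step, assuming $B_a^i(n) = p+ia$, the value $p+ia$ is prime by construction, so the definition of $B_a$ on primes immediately yields $B_a^{i+1}(n) = (p+ia) + a = p + (i+1)a$. Because $\beta_a$ agrees with $B_a$ on primes, the identical induction goes through for $\beta_a$, and so the same pair $(n,a)$ witnesses both chains simultaneously.

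There is essentially no obstacle beyond invoking Green--Tao itself --- all of the difficulty has been outsourced to that theorem. The only mild points to note are that the common difference $a$ can be taken positive (take $p$ to be the smallest term of the AP) and that the progression is nonconstant (automatic, since the $k+1$ primes are distinct), which together give the strict inequalities demanded by the statement.
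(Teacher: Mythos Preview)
Your proposal is correct and matches the paper's own argument essentially verbatim: the paper also deduces the corollary directly from Green--Tao, noting that an arithmetic progression of primes $p,\,p+a,\,\ldots,\,p+ka$ gives $B_a^i(p)=p+ia$ for each $i$, and the same for $\beta_a$. Your added induction and the remark that $a>0$ yields strict inequalities simply spell out what the paper leaves implicit.
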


\begin{rem}
Note that there is a natural extension of $B_a(n)$ to $\Z_{\geq0}$ by setting $B(1)=1$ and $B(0)=0$, and to negative integers by setting $B(-n)=B(n)$, similarly for $\beta_a(n)$. Thus iterating our functions can be viewed as studying dynamics on $\Z$ itself. We may also extend to $\Q$ by defining $B(\frac{x}{y}):=B(x)-B(y)$ for reduced fractions, though upon iterating once we return to $\Z$, and from there $\Z_{\geq0}$. One possible way of producing more interesting extensions is by setting $B(-n) = -B(n)$, and $B(\frac{x}{y})=B(x)/B(y).$
\end{rem}

\subsection{Prime-divisor fibres}
A question related to cycles is: For a fixed $p$, what is the set of solutions $\#\{n : B_a(n)=p\}$? The solution sets are the same for every $a$, except possibly the pre-image $\{p-a\}$, which will be counted if it is prime. From the proof of Theorem \ref{amicable} we have already found the smallest composite solution. 

In fact, the solutions to $B_a(n) = m$ for a fixed $m$ are given precisely by the prime partitions $\kappa(m)$ of $m$, as was already observed in \cite[Theoren 2.7]{J}, and there is at least one composite solution for all $n\ge 5$. From this fact we can immediately deduce from \cite[VIII.26]{FS} the following asymptotic:

\begin{prop}
We have
\be
\log(\#\{n : B_a(n)=m\}) \sim 2\pi\sqrt{\frac{m}{3\log m}}.
\ee
as $m\to\infty$.
\end{prop}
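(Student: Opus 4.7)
The plan is to reduce the counting problem to the classical enumeration of prime partitions of $m$ and then invoke the asymptotic recorded in \cite[VIII.26]{FS}.

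First, I would verify in detail the bijection hinted at in the paragraph preceding the statement. Given a composite $n = p_1^{r_1} \cdots p_k^{r_k}$ with $B(n) = m$, the identity $\sum r_i p_i = m$ exhibits $m$ as a partition into (not necessarily distinct) primes; conversely, any prime partition $m = \sum r_i p_i$ with distinct underlying primes $p_i$ reconstructs a unique $n = \prod p_i^{r_i}$ by unique factorization. This yields the exact equality $\#\{n : B(n) = m\} = \kappa(m)$, where $\kappa(m)$ is the number of prime partitions of $m$.

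Next, I would absorb the shift. Since $B$ and $B_a$ disagree only on primes, the two preimage sets of $m$ can differ at most at $n = m$ (counted for $B$ iff $m$ is prime) and at $n = m - a$ (counted for $B_a$ iff $m - a$ is prime). Therefore
\be
\#\{n : B_a(n) = m\} = \kappa(m) + O(1),
\ee
and since $\kappa(m) \to \infty$ this $O(1)$ perturbation is invisible on the logarithmic scale: $\log(\kappa(m) + O(1)) = \log \kappa(m) + o(1)$.

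Finally, I would quote \cite[VIII.26]{FS}, which supplies $\log \kappa(m) \sim 2\pi\sqrt{m/(3\log m)}$ via a saddle-point analysis of the generating series $\prod_p (1 - z^p)^{-1}$, essentially the classical Roth--Szekeres result. Combining the three steps gives the proposition. No step is a serious obstacle: the bijection is elementary, the shift is an $O(1)$ perturbation, and the prime-partition asymptotic is on the shelf; one need only observe that $\kappa(m)$ grows superpolynomially so that the additive correction does not distort the logarithmic asymptotic.
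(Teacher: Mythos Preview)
Your proposal is correct and follows essentially the same route as the paper: identify the preimage with prime partitions $\kappa(m)$ and then invoke \cite[VIII.26]{FS}. The paper gives no proof beyond the sentence preceding the proposition (citing \cite[Theorem 2.7]{J} for the bijection and \cite[VIII.26]{FS} for the asymptotic); your write-up is in fact more careful, since you explicitly account for the $O(1)$ discrepancy between $B$ and $B_a$ at the possible prime preimages, a point the paper leaves implicit.
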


\begin{rem}
Indeed, one even has a recursive definition for $\kappa(n)$ in terms of $\beta(n)$,
\be
\kappa(n)=\frac{1}{n}\big(\beta(n)+\sum_{i=1}^{n-1}\kappa(n-i)\beta(i)\big).
\ee 
with the initial condition $\kappa(1)=0$. In other words, the number solutions of $B(n)=m$ are determined by the values of $\beta(i)$ for $i\leq m$. 
\end{rem}

More generally, this can be phrased in terms of prime-divisor sum fibres, with reference to \cite{PPT}: Let $\mathscr{A}\subset \N$ be a set of asymptotic density zero (for example, the set of prime numbers). What is the preimage $B^{-1}_a(p)$ for a fixed $a,p$? Erd\H{o}s, Granville, Pomerance, and Spiro (EGPS) conjecture that the fibre $s^{-1}(\mathscr{A})$ also has asymptotic density zero. The following theorem shows that evidence towards the analogous conjecture also holds for $B(n) = B_0(n)$.

\begin{thm}
\label{EGPS}
Let $\epsilon=\epsilon(x)$ be a fixed function tending to 0 as $x\to\infty$, and let $\mathscr A$ be a set of at most $x^{\frac12+\epsilon(x)}$ positive integers. Then
\be
\#\{n\leq x : B(n)\in\mathscr A\} = o_\epsilon(x)
\ee
as $x\to\infty$, uniformly in the choice of $\mathscr A$.
\end{thm}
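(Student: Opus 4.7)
The plan is to mirror the proof of the analogous result for $s(n)$ due to Pollack, Pomerance, and Thompson \cite{PPT}. Since $B(n)\le n$ for every $n\ge 1$, any $n\le x$ with $B(n)\in\mathscr A$ satisfies $B(n)\le x$, so we may assume $\mathscr A\subseteq [1,x]$ throughout. The first step is to discard an $o(x)$ set of ``atypical'' $n$. Setting $y=x^{1/\log\log x}$, we remove: (i) those $n\le x$ with $P(n)\le y$, whose count $\Psi(x,y)$ is $o(x)$ by standard smooth-number bounds; and (ii) those $n\le x$ divisible by $p^2$ for some prime $p>y$, whose count is $\le \sum_{p>y}x/p^2=o(x)$. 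What remains is the ``typical'' class of $n=pm$ with $p=P(n)>y$, $p\nmid m$, and $P(m)<p$.

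For any such $n$, the structural identity
\be
B(pm)=p+B(m)
\ee
expresses the prime $p$ as a function of $m$ and the target value $v=B(n)$: namely $p=v-B(m)$. Thus each typical $n\le x$ with $B(n)\in\mathscr A$ is in bijection with a triple $(m,v,p)$ where $v\in\mathscr A$, $p=v-B(m)$ is prime, $y<p\le x/m$, and $p>P(m)$. Bounding the count of typical $n$ reduces to bounding the number of such triples.

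The crucial final step combines the sparseness hypothesis $|\mathscr A|\le x^{1/2+\epsilon(x)}$ with a sieve estimate for $B$. Following \cite{PPT}, I would decompose $\mathscr A$ into dyadic pieces $\mathscr A_k=\mathscr A\cap(2^k,2^{k+1}]$, and for each $k$ apply a Brun or Selberg upper-bound sieve, in the variable $m$, to count those $m$ for which $v-B(m)$ is prime. The distributional input comes from the Alladi--Erd\H{o}s statistics of $B$ (Proposition \ref{order}) and a Tur\'an--Kubilius-type concentration inequality for $B(m)$, which together control how often $B(m)$ lands in the window $[v-2^{k+1},v-2^k)$ as $m$ varies. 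The main obstacle is adapting the sieve estimate of \cite{PPT} from the divisor-sum function $s$ to our additive-like function $B$; once that analog is in hand, summing the contributions over $v\in\mathscr A$ and over dyadic scales produces the bound $o_\epsilon(x)$, uniformly in $\mathscr A$.
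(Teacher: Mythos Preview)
Your approach diverges from the paper's at the decomposition step. The paper follows \cite{PPT} directly: after discarding the exceptional $n$ of \cite[\S2]{PPT}, it writes $n=de$ where $d$ is the \emph{largest divisor of $n$ not exceeding $\sqrt{x}$}, so that the cofactor $e$ has only large prime factors. Complete additivity gives $B(e)=a-B(d)$ for each target $a\in\mathscr A$, and the PPT counting machinery (residue classes modulo $B(d)/g$, then summing over $g\mid a$ and over $d$) bounds the number of admissible pairs $(d,e)$ by $x^{1/2-9\epsilon}$ per target; summing over $|\mathscr A|\le x^{1/2+\epsilon}$ finishes.

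Your route instead takes $n=pm$ with $p=P(n)>y=x^{1/\log\log x}$, and there is a genuine gap in the final step. The tools you name do not apply: a Tur\'an--Kubilius inequality yields no concentration for $B$, because with $B(p)=p$ the second-moment term $\sum_{p\le x}B(p)^2/p\asymp x^2/\log x$ swamps everything (indeed $B(m)$ is governed by $P(m)$, which has a spread-out Dickman-type distribution, not a concentrated one). Nor is ``sieve in $m$ for $v-B(m)$ prime'' a recognisable sieve problem, since $m\mapsto v-B(m)$ carries no usable multiplicative structure in $m$. Most tellingly, even in the favourable range $P(n)>x^{1/2}$ (forcing $m<x^{1/2}$), the best fibre bound your decomposition delivers is the trivial $x^{1/2}$ per target $v$, hence only $|\mathscr A|\cdot x^{1/2}\le x^{1+\epsilon(x)}$, which is not $o(x)$. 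The extra saving of a small power of $x$ that closes the argument comes precisely from the balanced $n=de$ splitting and the rigidity of the cofactor $e$ in \cite{PPT}; that structural input is the missing idea in your outline.
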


\begin{proof}
Let $a\in\mathscr A$, and $n$ a non-exceptional preimage of $a$ in the sense of \cite[\S2]{PPT}. Write $n=de$ where $d$ is the largest divisor of $n$ not exceeding $\sqrt{x}$. Then following the proof of \cite[Theorem 1.2]{PPT}, we observe firstly that from $B(n)\leq s(n)$ that
\be
B(n) \ll x^{\frac12-10\epsilon(x)}\log x. 
\ee 
Secondly, $B(de) = B(d) + B(e)$ let $g=\gcd(B(d),B(e))$, then 
\be
\frac{a}{g} \equiv  \frac{B(e)}{g} \pmod{B(d)/g},
\ee
so that given $d$ we see that $B(e)$ lies in a uniquely determined residue class mod $B(d)/g$. Hence the number of choices of $B(e)$ given $d$ is
\be
\ll 1+x^{\frac12-10\epsilon(x)}\log x\frac{g}{B(d)}\le  1+x^{\frac12-10\epsilon(x)}\frac{\log x}{\log d}
\ee
where the second inequality follows from $B(d) \ge \log d $.

Now we sum over possible values of $g$ and $d$. We can write each $d$ as $gh$ where $h\leq x^{1/2-10\epsilon(x)}/g$. Thinking of $g$ as fixed and summing on $d = gh$ gives a bound of $\ll (\log x)^2x^{1/2-10\epsilon(x)}$. Whereas summing over the $\tau(a)$ divisors $g$ of $a$ bounds the number of possibilities for $n$ by 
\be
\ll \tau(a)(\log x)^2x^{1/2-10\epsilon(x)}<x^{1/\log\log x}x^{1/2-10\epsilon(x)}\leq x^{1/2-9\epsilon(x)},
\ee
bounding the number of $n$ that arise in this way as desired.
\end{proof}

\begin{rem}
Since $B_a(n)\leq s(n)$ is no longer true for $a>1$, it remains to ask whether the above holds for general $a$, and if moreover the EGPS conjecture should also hold. \end{rem}



\bibliography{AlladiErdos}

\begin{thebibliography}{EGPS90}

\bibitem[AE77]{AE}
K.~Alladi and P.~Erd\H{o}s.
\newblock On an additive arithmetic function.
\newblock {\em Pacific J. Math.}, 71(2):275--294, 1977.

\bibitem[EGPS90]{EGPS}
P.~Erd\H{o}s, A.~Granville, C.~Pomerance, and C.~Spiro.
\newblock On the normal behavior of the iterates of some arithmetic functions.
\newblock In {\em Analytic number theory ({A}llerton {P}ark, {IL}, 1989)},
  volume~85 of {\em Progr. Math.}, pages 165--204. Birkh\"auser Boston, Boston,
  MA, 1990.

\bibitem[Erd76]{E}
P.~Erd\H{o}s.
\newblock On asymptotic properties of aliquot sequences.
\newblock {\em Math. Comp.}, 30(135):641--645, 1976.

\bibitem[FS09]{FS}
Philippe Flajolet and Robert Sedgewick.
\newblock {\em Analytic combinatorics}.
\newblock Cambridge University Press, Cambridge, 2009.

\bibitem[Gol16]{G}
D.~Goldfeld.
\newblock On an additive prime divisor function of {A}lladi and {E}rd{\H{o}}s.
\newblock {\em arXiv preprint:1605.09044}, 2016.

\bibitem[GS75]{GS}
Richard~K. Guy and J.~L. Selfridge.
\newblock What drives an aliquot sequence?
\newblock {\em Math. Comput.}, 29:101--107, 1975.
\newblock Collection of articles dedicated to Derrick Henry Lehmer on the
  occasion of his seventieth birthday.

\bibitem[GT08]{GT}
B.~Green and T.~Tao.
\newblock The primes contain arbitrarily long arithmetic progressions.
\newblock {\em Ann. of Math. (2)}, 167(2):481--547, 2008.

\bibitem[Hal70]{H}
R.~R. Hall.
\newblock On the probability that {$n$} and {$f(n)$} are relatively prime.
\newblock {\em Acta Arith.}, 17:169--183, 1970.

\bibitem[Ivi02]{Iv}
A.~Ivi\'c.
\newblock On certain large additive functions.
\newblock In {\em Paul {E}rd\H os and his mathematics, {I} ({B}udapest, 1999)},
  volume~11 of {\em Bolyai Soc. Math. Stud.}, pages 319--331. J\'anos Bolyai
  Math. Soc., Budapest, 2002.

\bibitem[Jak12]{J}
Rafael Jakimczuk.
\newblock Sum of prime factors in the prime factorization of an integer.
\newblock {\em Int. Math. Forum}, 7(53-56):2617--2621, 2012.

\bibitem[PPT17]{PPT}
P.~Pollack, C.~Pomerance, and L.~Thompson.
\newblock Divisor-sum fibers.
\newblock {\em to appear in Mathematika}, 2017.

\end{thebibliography}
\bibliographystyle{alpha}

\end{document}